\newtheorem{thm}{Theorem}[section]
\newtheorem{lemma}[thm]{Lemma}
\newtheorem{cor}[thm]{Corollary}
\newtheorem{prob}[thm]{Problem}
\theoremstyle{definition}
\newtheorem{rem}[thm]{Remark}
\theoremstyle{definition}
\newcommand{\rank}{\ensuremath{\operatorname{rank}}}
\newcommand{\norm}[1]{{\left\|{#1}\right\|}}
\newcommand{\set}[1]{{\left\{{#1}\right\}}}
\newcommand{\al}{\ensuremath{\alpha}}
\newcommand{\be}{\ensuremath{\beta}}
\newcommand{\de}{\ensuremath{\delta}}
\newcommand{\eps}{\ensuremath{\varepsilon}}
\newcommand{\ze}{\ensuremath{\zeta}}
\newcommand{\La}{\ensuremath{\Lambda}}
\newcommand{\om}{\ensuremath{\omega}}
\newcommand{\bears}{\begin{eqnarray*}}
\newcommand{\eears}{\end{eqnarray*}}
\newcommand{\mc}[1]{\ensuremath{\mathcal{#1}}}
\newcommand{\ZZ}{\ensuremath{\mathbb{Z}}}
\newcommand{\QQ}{\ensuremath{\mathbb{Q}}}
\newcommand{\RR}{\ensuremath{\mathbb{R}}}
\newcommand{\TT}{\ensuremath{\mathbb{T}}}
\newcommand{\CC}{\ensuremath{\mathbb{C}}}
\newcommand{\sm}{\ensuremath{\setminus}}
\newcommand{\ssq}{\ensuremath{\subseteq}}
\newcommand{\bs}[1]{\ensuremath{\mathbf{#1}}}
\newcommand{\bsm}{\ensuremath{\boldsymbol}}
\numberwithin{equation}{section}
\title{Rotations by roots of unity and Diophantine approximation}
\subjclass[2010]{11J71}
\keywords{Pyjama problem, Kronecker approximation Theorem, vanishing sums of roots of unity}
\author{Romanos-Diogenes Malikiosis} 
\thanks{The author is supported by a Postdoctoral Fellowship from Humboldt Foundation.}
\address{Technische Universit\"at Berlin, Institut f\"ur Mathematik,
Sekretariat MA 4-1,
Stra{\ss}e des 17. Juni 136,
D-10623 Berlin, Germany}
\email{malikios@math.tu-berlin.de}
\begin{document}
 
 \begin{abstract}
  For a fixed integer $n$, we study the question whether at least one of the numbers $\Re X\om^k$, $1\leq k\leq n$, is $\eps$-close to an integer, 
  for any possible value of $X\in\CC$, where
  $\om$ is a primitive $n$th root of unity. It turns out that there is always a $X$ for which the above numbers are concentrated around $1/2\bmod1$. The distance from
  $1/2$ depends only on the local properties of $n$, rather than its magnitude. This is directly related the so--called ``pyjama'' problem which was
  solved recently.
 \end{abstract}

 \maketitle

 \bigskip
 \bigskip
\section{Introduction}
\bigskip

Let $n$ be a positive integer and define $\om=e^{2\pi i/n}$. For a fixed arbitrarily small value of $\eps>0$ it could be expected that for large values of $n$,
there is at least one among the 
numbers $\Re X\om^k$, $1\leq k\leq n$ who is within $\eps$ distance from an integer, for every choice of $X\in\CC$.
However, it turns out that there are values of $X$ for which all numbers $\set{\Re X\om^k}$, where $\set{x}$ denotes the fractional part of $x$, are close to
$1/2$, in particular they all fall within the interval $[1/3,2/3]$. Furthermore, we find out that they could be confined in a possibly smaller interval; the length of
the interval around $1/2$ does not depend on the magnitude of $n$ rather on its local properties. In particular, the main result of this paper is the following:

\bigskip

\begin{thm}\label{main}
 With notation as above, let $n>1$ be an integer and $\eps>0$. Then
 there is $X\in \CC$ such that $\set{\Re X\om^k}\in(\eps,1-\eps)$ for all $1\leq k\leq n$, 
 if and only if 
 \begin{enumerate}
  \item $\eps<\frac{p-1}{2p}$, when $n$ is not a power of $2$ and $p$ is the smallest odd prime dividing $n$.
  \item $\eps<1/2$, otherwise.
 \end{enumerate}
\end{thm}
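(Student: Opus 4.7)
My plan is to tackle the two directions of the equivalence separately, relying on Kronecker's approximation theorem together with an analysis of the $\ZZ$-lattice of integer vanishing sums of $n$-th roots of unity.

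For the necessity, assume an odd prime $p$ divides $n$ and set $d=n/p$, so that $\om^d$ is a primitive $p$-th root of unity and $\sum_{j=1}^{p}\om^{jd}=0$. Taking real parts after multiplication by $X$ gives $\sum_{j=1}^{p}\Re(X\om^{jd})=0$. Writing each $\Re(X\om^{jd})=\tfrac12+\rho_j+m_j$ with $\rho_j\in(\eps-\tfrac12,\tfrac12-\eps)$ and $m_j\in\ZZ$, the fact that $p/2\in\tfrac12+\ZZ$ for $p$ odd forces $\sum_j\rho_j\in\tfrac12+\ZZ$, hence $|\sum_j\rho_j|\ge\tfrac12$; combined with $|\rho_j|<\tfrac12-\eps$ this yields $\eps<(p-1)/(2p)$. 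When $n$ is a power of $2$ the only restriction is $\eps<\tfrac12$, imposed by the requirement that $(\eps,1-\eps)$ be nonempty.

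For the sufficiency I work with the two-dimensional $\RR$-subspace $L\subset\RR^n$ given by the image of $X\mapsto(\Re X\om^k)_{k=1}^n$. By Pontryagin--Kronecker duality the closure of $L+\ZZ^n$ in $\RR^n$ is exactly
\[
\{y\in\RR^n : \textstyle\sum_k c_k y_k\in\ZZ\ \text{for every}\ c\in A\},
\]
where $A\subset\ZZ^n$ is the lattice of integer vanishing sums $\sum_k c_k\om^k=0$. It therefore suffices to exhibit one $y\in\RR^n$ with $y_k\in(\eps,1-\eps)+\ZZ$ for all $k$ and $\sum_k c_ky_k\in\ZZ$ for every $c\in A$; density of $L$ in the closure then supplies $X\in\CC$ with $(\Re X\om^k)_k$ as close to $y$ modulo $\ZZ^n$ as desired, which once the tolerance is finer than $(p-1)/(2p)-\eps$ gives $\{\Re X\om^k\}\in(\eps,1-\eps)$ for every $k$.

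To construct $y$ I split cases. When $n=2^m$ I take $y_k=1/2$ for all $k$: the lattice $A$ is $\ZZ$-generated by the vectors $\mathbf 1_{\{s,s+n/2\}}$ arising from the relations $\om^s+\om^{s+n/2}=0$, each of coefficient sum $2$, so $\sum_k c_ky_k=\tfrac12\sum_kc_k\in\ZZ$ for every $c\in A$, and any $\eps<1/2$ works. When $n$ has smallest odd prime divisor $p$ I take $y_k=\tfrac12+\tfrac{\eta_k}{2p}$ for a sign function $\eta\colon\ZZ/n\to\{\pm1\}$; a short calculation, exploiting that $p$ is odd and $\eta_k\in\{\pm1\}$, reduces the membership $\sum_k c_ky_k\in\ZZ$ to the single congruence $\sum_k c_k\eta_k\equiv 0\pmod p$ for every $c\in A$. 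Using the Lam--Leung description of $A$, it is enough to test this against the prime-cycle generators $\mathbf 1_{H_q+s}$, where $q$ runs over primes dividing $n$ and $H_q\leq\ZZ/n$ is the order-$q$ subgroup. I then build $\eta$ via CRT as $\eta_k=\prod_{q\mid n}\eta^{(q)}(k_q)$, with $k_q$ the projection of $k$ to the $q$-primary factor of $\ZZ/n$: choosing $\eta^{(p)}$ constant on the cosets of the order-$p$ subgroup inside the $p$-primary part (so each $p$-cycle sum is $\pm p\equiv0\pmod p$ automatically); $\eta^{(2)}$ anti-periodic under the unique order-$2$ element when $2\mid n$; and $\eta^{(q)}$ for each odd prime $q>p$ with coset-sums $\equiv\pm p\pmod{2p}$, which is achievable since $(q\pm p)/2\in[0,q]$. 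The resulting $y$ satisfies $|y_k-\tfrac12|=\tfrac{1}{2p}$, which lies in $(\eps,1-\eps)+\ZZ$ for every $\eps<(p-1)/(2p)$.

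The step I expect to be the main obstacle is the construction of $\eta$ when $n$ has three or more distinct prime divisors. The CRT factorisation decouples the coset-sum problem prime by prime, but Lam--Leung's theorem in its sharpest form (every nonnegative-integer vanishing sum of $n$-th roots of unity is a nonnegative-integer combination of prime-cycle relations) is only known in full generality for $n$ with at most two distinct prime divisors; for general $n$ the $\ZZ$-span of the prime cycles may be a proper sublattice of $A$. I will need to verify that the obstruction---the quotient of $A$ by the $\ZZ$-span of its prime-cycle generators---is free of $p$-torsion, so that mod-$p$ orthogonality to the prime cycles forces mod-$p$ orthogonality to all of $A$. Once this check is complete, Kronecker's approximation theorem concludes the argument.
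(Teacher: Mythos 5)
Your proposal is essentially the paper's argument: Kronecker's theorem identifies the closure of $\{(\Re X\om^k)_k\}+\ZZ^n$ with the dual set $\{\bsm y: \bsm c\cdot\bsm y\in\ZZ\ \forall \bsm c\in\La_n\}$, and one then exhibits a point of that set of the form $\tfrac12\bs 1_n+\tfrac{1}{2p}\bsm\eta$ with $\bsm\eta\in\{\pm1\}^n$; your CRT sign pattern (constant on $p$-cosets, antiperiodic on $2$-cosets, coset sums $\equiv\pm p\pmod{2p}$ on $q$-cosets for odd $q>p$) is exactly what the paper encodes in the polynomial $T(X)=\frac{X^{n/m-1}}{X-1}\prod_{q\mid n}\Psi_q(X^{n/q})$, whose coefficient signs are products of per-prime signs. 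Your necessity argument is a pleasant shortcut: you extract the single relation $\sum_{j=1}^p\om^{jn/p}=0$ and run a direct pigeonhole on $\sum_j\rho_j\in\tfrac12+\ZZ$, whereas the paper computes the $\ell^\infty$ distance from $\tfrac12\bs1_n$ to the hyperplane family $\bsm l\cdot\bsm x\in\ZZ$ via H\"older duality and then invokes monotonicity $\de_p\le\de_n$; the two are equivalent, and yours is more elementary.

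The one step you flag as the ``main obstacle'' is not an obstacle, but you have misidentified which theorem you need. The caveat about $n$ having at most two distinct prime divisors concerns the \emph{nonnegative} version (every vanishing sum with coefficients in $\NN$ is an $\NN$-combination of rotated prime cycles), which indeed fails for three or more primes. What your argument requires is only the $\ZZ$-module statement: the lattice $A=\La_n$ of \emph{integer} vanishing sums is generated over $\ZZ$ by the rotated prime cycles $\bs 1_q\otimes\bs e(i,n/q)$. This is the classical theorem of R\'edei, de Bruijn, and Schoenberg, valid for \emph{all} $n$ (it is Lemma \ref{rtsstr} in the paper). Consequently the quotient you propose to analyze is zero, and since $\bsm c\mapsto\sum_k c_k\eta_k\bmod p$ is a homomorphism on $A$, checking it on the prime-cycle generators already forces it on all of $A$; no torsion analysis is needed. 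With that citation in place your proof is complete.
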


\bigskip

Phrased differently, this could be considered as the cyclotomic case of the ``pyjama'' problem \cite{IKM}, which states:

\bigskip
\begin{prob}\label{pyjama}
 Let $E_{\eps}$ denote the set of 
equidistant vertical stripes on $\CC$ of width $2\eps$:
\[\{z\in\CC|\Re z\in\ZZ+[-\eps,\eps]\}.\]
Can we cover the entire plane by 
finitely many rotations of $E_{\eps}$, for arbitrarily small $\eps>0$?
\end{prob}
\bigskip

If $X\notin E_{\eps}$, then $\set{\Re X}\in(\eps,1-\eps)$, and we can easily deduce that if $X$ does not belong to any rotation of $E_{\eps}$ under the set of
modulus one complex numbers $\Theta=\set{\theta_1,\dotsc,\theta_n}$, then $\set{\Re X\theta_k^{-1}}\in(\eps,1-\eps)$ for all $k$. 
The cyclotomic case that is treated herein, considers the $n$-th roots of unity
\[\Theta=\set{e^{2\pi i k/n}|1\leq k \leq n}.\]
As Theorem \ref{main} states, all numbers $\Re X\om^k$ are far from being integers for some choices of $X$; 
this was already noted in \cite{MMR}, but the proof was omitted as it didn't contribute to the solution of Problem \ref{pyjama}.
So, what happens for other choices of angles? Taking 
$\theta_k$ to be $\QQ$-linearly independent also fails \cite{MMR}, since in that case there are choices of 
$X$ such that $\set{\Re X\theta_k^{-1}}\approx 1/2$ for
all $k$. On the other extreme, if the dimension of the $\QQ$-vector space spanned by $\theta_k$ is as small as possible, then it is equal to  $2$, and all these numbers belong to the
same imaginary quadratic field\footnote{Assuming that one rotation is the identity, or equivalently, $\theta_i=1$ for some $i$}.
However, in this case as well, there are always $X\in\CC$ such that $\set{\Re X\theta_k^{-1}}\in[1/3,2/3]$ for all $1\leq k\leq n$
\cite{MMR}. The pyjama
problem was solved in the affirmative \cite{M}, where for every $\eps>0$ an appropriate rotation set was constructed that belongs to a biquadratic extension. In the current setting,
it is proven in \cite{M} that for every $\eps>0$ there are positive integers $n$, $N$, such that for every $X\in\CC$ there is some $\theta_k\in\Theta'(n,N)$ satisfying
$\set{\Re X\theta_k^{-1}}\in[0,\eps)$ or $(1-\eps,1)$. This set $\Theta'(n,N)$ can be written as
\[\Theta'(n,N)=\Theta_N\cup \ze_1^{(n)}\Theta_N\cup\ze_{2}^{(n)}\Theta_N,\]
where $\ze_1^{(n)}$, $\ze_2^{(n)}$ unit complex numbers satisfying
\[n(1+\ze_1^{(n)})=\ze_2^{(n)},\]
while
\[\Theta_N=\set{\theta_5^r\theta_{13}^s|0\leq r, s\leq N},\]
where 
\[\theta_5=\frac{-3+4i}{5},\	\theta_{13}=\frac{-5+12i}{13}.\]
The notation is justified by the fact that $\theta_5=\mc P_5/\overline{\mc P_5}$ and $\theta_{13}=\mc P_{13}/\overline{P_{13}}$, where $\mc P_5=1+2i$ and
$\mc P_{13}=2+3i$, the primes in $\ZZ[i]$ that divide $5$ and $13$, respectively\footnote{As it is pointed out in \cite{M}, any primes of the form $1\bmod4$ would
yield this result.}. Apparently, all elements of $\Theta'(n,N)$ belong to the biquadratic extension $\QQ(i,\sqrt{4n^2-1})$.

The main tools for the proof of Theorem \ref{main} are Kronecker's approximation theorem \cite{K}, which is used to characterize the set of the $n$-tuples 
$(\Re X\theta_1^{-1},\dotsc,\Re X\theta_n^{-1})$ in Section 2, and a theorem proved independently by R\'edei \cite{R1,R2}, de Brujin \cite{deB}, and Schoenberg \cite{Sch}
regarding vanishing $\ZZ$-linear combinations of $n$th roots of unity; the proof of Theorem \ref{main} is in Section 3.

\bigskip
\bigskip

\section{Structure of the set of all $(\set{\Re X\al_1},\dotsc, \set{\Re X\al_n})$}

\bigskip
Denote by  $\TT^n$ the $n$-dimensional torus which is isomorphic to $[0,1)^n$ under the appropriate topology and group structure.  In this section we will 
investigate the structure of the subset of all $n$-tuples of the form $(\set{\Re X\al_1}, \set{\Re X\al_2},\dotsc, \set{\Re X\al_n})$ over all $X\in\CC$,
for some vector $\bsm\al=(\al_1,\dotsc,\al_n)\in\CC^n$. It is natural to work with the periodization of this set in $\RR^n$, which is
\[\set{(\Re X\al_1,\dotsc,\Re X\al_n)|X\in\CC}+\ZZ^n,\]
or simpler,
\[\ZZ^n+\RR\Re\bsm\al+\RR\Im\bsm\al,\]
where $\Re\bsm\al=(\Re\al_1,\dotsc,\Re\al_n)$ and $\Re S=\set{\Re s|s\in S}$, for any $S\ssq\CC$ (similarly for $\Im$).
Define
\[\La_{\bsm{\al}}:=\{\bsm{l}=(l_1,\dotsc,l_n)\in\ZZ^n|\bsm{l}\cdot\bsm{\al}=0\}.\]
We can simply write
\[\La_{\bsm{\al}}=\ZZ^n\cap\bsm\al^{\perp}.\]
%Now, restrict our attention to $\RR^{k+1}$. 
Next, define $V_{\bsm{\al}}:=\La_{\bsm{\al}}\otimes_{\ZZ}\RR$, and $\La_{\bsm{\al}}^{*}$ the dual lattice of $\La_{\al}$ in
$V_{\bsm{\al}}$, with inner product inherited from $\RR^n$, and let
\[\mathcal{E}_{\bsm{\al}}:=\La_{\bsm{\al}}^{*}\oplus V_{\bsm{\al}}^{\perp}.\]
We have
\[\mathcal{E}_{\bsm{\al}}=\{\bsm{\xi}\in\RR^n|\bsm{l}\cdot\bsm{\xi}\in\ZZ,\forall\bsm{l}\in\La_{\bsm{\al}}\}.\]
The following Lemma is crucial towards the proof of Theorem \ref{main}; it follows from Kronecker's approximation theorem.

\bigskip
\begin{thm}[Kronecker, \cite{K}]
 Let $\bsm\al_1,\dotsc,\bsm\al_k,\bsm\be\in\RR^n$. For every $\eps>0$ there are integers $q_1,\dotsc,q_k$ and a vector $\bsm p\in\ZZ^n$ such that
 \[\norm{\sum_{i=1}^k q_i\bsm\al_i-\bsm p-\bsm\be}_{\infty}<\eps,\]
 if and only if for every $\bsm r\in\ZZ^n$ with $\bsm r\cdot\bsm\al_i\in\ZZ$ for all $1\leq i\leq k$ we also have $\bsm r\cdot\bsm\be\in\ZZ$.
\end{thm}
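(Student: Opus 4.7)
The plan is to transfer everything into the torus $\TT^n=\RR^n/\ZZ^n$, where the approximation condition becomes a subgroup-membership statement and the divisibility condition becomes a statement about characters, and then invoke Pontryagin duality.

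I would first let $\pi\colon\RR^n\to\TT^n$ be the projection and set
\[G\,=\,\overline{\pi(\ZZ\bsm\al_1+\cdots+\ZZ\bsm\al_k)}\ \ssq\ \TT^n,\]
the closed subgroup topologically generated by the $\pi(\bsm\al_i)$. The existence of the desired approximation for every $\eps>0$ is exactly the membership $\pi(\bsm\be)\in G$. On the character side, $\widehat{\TT^n}\cong\ZZ^n$ via $\bsm r\leftrightarrow\chi_{\bsm r}(x)=e^{2\pi i\bsm r\cdot x}$, and the annihilator of $G$ in $\widehat{\TT^n}$ is precisely $\set{\bsm r\in\ZZ^n:\bsm r\cdot\bsm\al_i\in\ZZ\text{ for all }i}$. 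The theorem thus reads: $\pi(\bsm\be)\in G$ iff $\pi(\bsm\be)$ is annihilated by every character annihilating $G$.

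The forward direction then reduces to a one-line inner-product computation: if $\bsm r\cdot\bsm\al_i\in\ZZ$ for all $i$, dotting $\bsm r$ into the approximation bound gives
\[\abs{\,(\text{integer})-\bsm r\cdot\bsm\be\,}\,\le\,\norm{\bsm r}_1\,\eps,\]
and letting $\eps\to 0$ forces $\bsm r\cdot\bsm\be\in\ZZ$. For the converse I would argue contrapositively: if $\pi(\bsm\be)\notin G$, then the image of $\pi(\bsm\be)$ in the nontrivial compact Hausdorff abelian quotient $\TT^n/G$ is nonzero, and the standard separation-by-characters result produces a continuous character of $\TT^n/G$ nontrivial at that image; its pullback to $\TT^n$ is necessarily of the form $\chi_{\bsm r}$ for some $\bsm r\in\ZZ^n$, which is then the required witness.

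The main obstacle is exactly this separation of points by characters on a compact abelian group, which is the substantive content of Pontryagin duality (equivalently, the Peter-Weyl theorem in the abelian case). If one prefers to avoid this machinery as a black box, one can instead appeal to the structure theorem for closed subgroups of $\TT^n$: after a $\GL_n(\ZZ)$ change of coordinates, $G$ becomes a product of a coordinate subtorus with a finite abelian group, from which both the duality $G=G^{\perp\perp}$ and the explicit construction of $\bsm r$ can be read off by inspection.
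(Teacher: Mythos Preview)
The paper does not actually prove this statement: Kronecker's theorem is quoted with a citation to \cite{K} and then used as a black box in the proof of Lemma~\ref{mainlemma}. So there is no ``paper's own proof'' to compare against.

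Your proposal is a correct and standard modern route. The forward direction is exactly right: dotting the approximation inequality against any $\bsm r$ with $\bsm r\cdot\bsm\al_i\in\ZZ$ and letting $\eps\to0$ forces $\bsm r\cdot\bsm\be\in\ZZ$. For the converse, your reformulation as $\pi(\bsm\be)\in G$ iff $\chi_{\bsm r}(\pi(\bsm\be))=1$ for all $\bsm r$ in the annihilator of $G$ is precisely the double-annihilator identity $G=G^{\perp\perp}$ for closed subgroups of $\TT^n$, and you correctly flag that this is where the real content lies. Either of the two justifications you offer---separation of points by characters on the compact abelian quotient $\TT^n/G$, or the explicit structure theorem for closed subgroups of $\TT^n$ after a $\GL_n(\ZZ)$ change of basis---is sufficient and standard. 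There is no gap.
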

\bigskip

Taking the coordinates of the above vectors $\bmod1$, the above theorem gives a characterization of the closure of the subgroup of $\TT^n$ generated by
$\bsm\al_i\bmod\ZZ^n$, $1\leq i\leq k$. In particular, we get the following lemma.

\bigskip
\begin{lemma}\label{mainlemma}
Let $\bsm{\al}\in\CC^n$ and $\mathcal{E}_{\bsm{\al}}$ as above. Then, $\mathcal{E}_{\bsm{\al}}$ is the 
closure of $\ZZ^n+\RR\Re\bsm{\al}+\RR\Im\bsm{\al}$. 
\end{lemma}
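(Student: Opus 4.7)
The plan is to establish both inclusions of $\overline{H}=\mathcal{E}_{\bsm{\al}}$, where $H:=\ZZ^n+\RR\Re\bsm{\al}+\RR\Im\bsm{\al}$. The inclusion $\overline{H}\ssq\mathcal{E}_{\bsm{\al}}$ is immediate: any $\bsm{l}\in\La_{\bsm{\al}}$ satisfies $\bsm{l}\cdot\Re\bsm{\al}=\bsm{l}\cdot\Im\bsm{\al}=0$ (equivalent to $\bsm{l}\cdot\bsm{\al}=0$, since $\bsm{l}\in\ZZ^n\ssq\RR^n$), so the continuous functional $\bsm{\xi}\mapsto\bsm{l}\cdot\bsm{\xi}$ sends $H$ into $\ZZ$ and therefore sends $\overline{H}$ into $\ZZ$; intersecting over all $\bsm{l}\in\La_{\bsm{\al}}$ yields the claim. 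For the reverse inclusion the natural strategy is to invoke Kronecker's theorem, but a subtlety intervenes: Kronecker produces \emph{integer} combinations of its input vectors, whereas membership in $H$ allows arbitrary \emph{real} coefficients of $\Re\bsm{\al}$ and $\Im\bsm{\al}$.

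To bridge this gap, I would first perform a generic dilation. The set $A:=\set{\bsm{r}\cdot\Re\bsm{\al}:\bsm{r}\in\ZZ^n}\sm\set{0}$ is countable, so the ``bad'' set of scalars $\bigcup_{a\in A}\frac{1}{a}\ZZ$ has Lebesgue measure zero; pick $t_0\in\RR$ in its complement. By construction, for every $\bsm{r}\in\ZZ^n$ the membership $t_0(\bsm{r}\cdot\Re\bsm{\al})\in\ZZ$ forces $\bsm{r}\cdot\Re\bsm{\al}=0$. Pick $s_0\in\RR$ analogously so that $s_0(\bsm{r}\cdot\Im\bsm{\al})\in\ZZ$ forces $\bsm{r}\cdot\Im\bsm{\al}=0$. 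Given $\bsm{\xi}\in\mathcal{E}_{\bsm{\al}}$ and $\eps>0$, I would then apply Kronecker's theorem to $\bsm{\al}_1:=t_0\Re\bsm{\al}$, $\bsm{\al}_2:=s_0\Im\bsm{\al}$, and target $\bsm{\be}:=\bsm{\xi}$. By the choice of $t_0,s_0$, the simultaneous conditions $\bsm{r}\cdot\bsm{\al}_1\in\ZZ$ and $\bsm{r}\cdot\bsm{\al}_2\in\ZZ$ collapse to $\bsm{r}\in\La_{\bsm{\al}}$, so Kronecker's hypothesis becomes ``$\bsm{r}\in\La_{\bsm{\al}}\Rar\bsm{r}\cdot\bsm{\xi}\in\ZZ$'', which is precisely the defining property of $\bsm{\xi}\in\mathcal{E}_{\bsm{\al}}$. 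The theorem then produces integers $q_1,q_2$ and $\bsm{p}\in\ZZ^n$ with $\norm{q_1 t_0\Re\bsm{\al}+q_2 s_0\Im\bsm{\al}-\bsm{p}-\bsm{\xi}}_{\infty}<\eps$, and the element $\bsm{p}+q_1 t_0\Re\bsm{\al}+q_2 s_0\Im\bsm{\al}\in H$ witnesses $\bsm{\xi}\in\overline{H}$.

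The hard part of the plan---and the main obstacle---is precisely this integer-versus-real coefficient mismatch. A direct application of Kronecker to $\Re\bsm{\al},\Im\bsm{\al}$ themselves would weaken the $\bsm{r}$-hypothesis to $\bsm{r}\cdot\Re\bsm{\al},\bsm{r}\cdot\Im\bsm{\al}\in\ZZ$ rather than $=0$, in general defining a strictly larger sublattice of $\ZZ^n$ than $\La_{\bsm{\al}}$ and thus characterizing a strictly smaller closure than $\mathcal{E}_{\bsm{\al}}$. The generic rescaling eliminates every nonzero integer-valued relation and brings Kronecker's hypothesis into exact agreement with the definition of $\La_{\bsm{\al}}$.
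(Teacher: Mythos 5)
Your proof is correct, and while it rests on the same tool (Kronecker's theorem) it takes a genuinely different route to it. The paper confronts the same integer--versus--real coefficient mismatch that you single out, but resolves it by normalizing a nonzero coordinate to $\al_n=1$ (treating $\bsm{\al}=\bsm{0}$ separately), using the last coordinate to pin the real coefficient of $\Re\bsm{\al}$ to the values $x_n+k$ with $k\in\ZZ$, and then applying Kronecker in $\TT^{n-1}$ to the projections of $\Re\bsm{\al}$ and $\Im\bsm{\al}$, identifying the resulting dual condition with $\pi(\La_{\bsm{\al}})$. You instead stay in $\TT^n$ and remove the mismatch at the source: the generic dilations $t_0\Re\bsm{\al}$, $s_0\Im\bsm{\al}$ force every integer-valued relation $\bsm{r}\cdot(t_0\Re\bsm{\al})\in\ZZ$, $\bsm{r}\cdot(s_0\Im\bsm{\al})\in\ZZ$ to collapse to $\bsm{r}\cdot\Re\bsm{\al}=\bsm{r}\cdot\Im\bsm{\al}=0$, i.e. to $\bsm{r}\in\La_{\bsm{\al}}$, so Kronecker's hypothesis coincides exactly with the definition of $\mathcal{E}_{\bsm{\al}}$; the existence of $t_0,s_0$ is clear since the bad set is a countable union of countable sets. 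What your version buys is the absence of any normalization, projection, or case analysis, and a cleaner match with $\La_{\bsm{\al}}$: in the paper's reduction the Kronecker constraint on $(l_1,\dotsc,l_{n-1})$ is that $l_1\al_1+\dotsb+l_{n-1}\al_{n-1}$ lie in $\ZZ+i\ZZ$, which is identified with the condition $\in\ZZ$ defining $\pi(\La_{\bsm{\al}})$ --- a subtlety your rescaling sidesteps entirely. The only cost is the (harmless) nonconstructive choice of $t_0$ and $s_0$.
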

\bigskip

\begin{proof}
 Inclusion is obvious: let $\bsm{\xi}\in\ZZ^n+\RR\Re\bsm{\al}+\RR\Im\bsm\al$ be arbitrary. So, $\bsm{\xi}=\bsm{n}+x\Re\bsm{\al}+y\Im\bsm\al$ for some
$\bsm{n}\in\ZZ^n$ and $x,y\in\RR$. Now, take $\bsm{l}\in\La_{\bsm{\al}}$ be arbitrary. We have
\[\bsm{l}\cdot\bsm{\xi}=\bsm{l}\cdot\bsm{n}+x(\bsm{l}\cdot\Re\bsm{\al})+y(\bsm{l}\cdot\Im\bsm\al)=\bsm{l}\cdot\bsm{n}\in\ZZ,\]
since $\La_{\bsm{\al}}\ssq\ZZ^n$ by definition. Hence, $\xi\in\mathcal{E}_{\bsm{\al}}$.

Now let $\bsm{x}=(x_1,\dotsc,x_n)\in\mathcal{E}_{\bsm{\al}}$ be arbitrary. We will approximate $\bsm{x}$ by elements of
$\ZZ^n+\RR\Re\bsm{\al}+\RR\Im\bsm\al$ as close as possible. First, we assume $\al_n\neq0$. Without loss of generality, we may assume
 $\al_n=1$; the sets $\mathcal{E}_{\bsm{\al}},\La_{\bsm{\al}},V_{\bsm{\al}}$ remain unchanged if we multiply $\bsm{\al}$ by
 any nonzero complex number. It suffices to prove that the double sequence 
\[(x_n+k)\Re\bsm{\al}+m\Im\bsm\al-\bsm{x}=((x_n+k)\Re\al_1+m\Im\al_1-x_1,\dotsc,(x_n+k)\Re\al_{n-1}+m\Im\al_{n-1}-x_{n-1},k), k,m\in\ZZ\]
has terms arbitrarily close to $\ZZ^n$, or equivalently, the double sequence
\[k(\Re\al_1,\dotsc,\Re\al_{n-1})+m(\Im\al_1,\dotsc,\Im\al_{n-1}), k,m\in\ZZ\]
has terms arbitrarily close to $(x_1-x_n\Re\al_1,\dotsc,x_{n-1}-x_n\Re\al_{n-1})+\ZZ^{n-1}$. Consider the projection $\pi:\CC^n\rightarrow\CC^{n-1}$
that ``forgets'' the last coordinate and the natural epimorphism $\varphi:\RR^{n-1}\rightarrow\RR^{n-1}/\ZZ^{n-1}=\TT^{n-1}$. Since
$\al_n=1$, we have 
\[\pi(\La_{\bsm{\al}})=\set{(l_1,\dotsc,l_{n-1})\in\ZZ^{n-1}|l_1\al_1+\dotsb+l_{n-1}\al_{n-1}\in\ZZ}.\]
By Kronecker's approximation theorem, the closure of the subgroup of $\TT^{n-1}$
generated by 
\[\pi(\Re\bsm{\al})\bmod\ZZ^{n-1}=(\Re\al_1,\dotsc,\Re\al_{n-1})\bmod\ZZ^{n-1}\]
and
\[\pi(\Im\bsm{\al})\bmod\ZZ^{n-1}=(\Im\al_1,\dotsc,\Im\al_{n-1})\bmod\ZZ^{n-1}\]
is the set of all $(a_1,\dotsc,a_{n-1})+\ZZ^{n-1}$ for which $l_1a_1+\dotsb+l_{n-1}a_{n-1}\in\ZZ$ whenever $l_1,\dotsc l_{n-1}\in\ZZ$ and 
$l_1\Re\al_1+\dotsb+l_{n-1}\Re\al_{n-1},l_1\Im\al_1+\dotsb+l_{n-1}\Im\al_{n-1}\in\ZZ$, i.~e. $(l_1,\dotsc,l_{n-1})\in\pi(\La_{\al})$. 
In other words, it suffices to prove that
\[\pi(\bsm{l})\cdot(x_1-x_n\Re\al_1,\dotsc,x_{n-1}-x_n\Re\al_{n-1})\in\ZZ\]
whenever $\bsm{l}\in\La_{\bsm{\al}}$. So, when $\bsm{l}\in\La_{\bsm{\al}}$, by definition we have $\bsm{l}\cdot\bsm{x}\in\ZZ$.
Hence, $\bsm{l}\cdot(\bsm{x}-x_n\Re\bsm{\al})\in\ZZ$. The latter is obviously equal to the desired inner product, so the proof
is finished for $\al_n\neq0$.

If $\bsm{\al}\neq\bsm{0}$, then one coordinate is different from zero, say $\al_i$; as above we may assume that $\al_i=1$ and we 
argue as before, with $\pi$ replaced by $\pi^i:\RR^n\rightarrow\RR^{n-1}$, the projection that ``forgets'' the $i$-th
coordinate. If $\bsm{\al}=\bsm{0}$, then obviously $\mathcal{E}_{\bsm{\al}}=\ZZ^n$. This completes the proof.
\end{proof}

\bigskip

\begin{cor}%\rm
When the coordinates of $\bsm{\al}$ are linearly independent over $\QQ$, then 
 the set
\[\{(\{\Re x\al_1\},\dotsc,\{\Re x\al_n\})|x\in\CC)\}\]
is dense in $\TT^n$.
\end{cor}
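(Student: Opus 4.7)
The plan is to deduce this directly from Lemma \ref{mainlemma}, since density in $\TT^n$ corresponds to the lifted set being dense modulo $\ZZ^n$ in $\RR^n$.

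First I would rewrite the target set as a projection. Writing $x = a+bi$ with $a,b \in \RR$, one has $\Re(x\al_k) = a\Re\al_k - b\Im\al_k$, so as $x$ ranges over $\CC$ the vector $(\Re x\al_1,\dotsc,\Re x\al_n)$ sweeps out exactly $\RR\Re\bsm\al + \RR\Im\bsm\al$. Taking fractional parts coordinate-wise is the quotient map $\RR^n \to \RR^n/\ZZ^n = \TT^n$, so the set in the statement is the image under this quotient of $\RR\Re\bsm\al + \RR\Im\bsm\al$, equivalently the image of $\ZZ^n + \RR\Re\bsm\al + \RR\Im\bsm\al$. Since the quotient map is both continuous and open, the closure of the image equals the image of the closure.

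Next I would invoke Lemma \ref{mainlemma} to identify that closure in $\RR^n$ with $\mathcal{E}_{\bsm\al} = \La_{\bsm\al}^* \oplus V_{\bsm\al}^\perp$. It then suffices to prove $\mathcal{E}_{\bsm\al} = \RR^n$, since this projects onto all of $\TT^n$. By the definition
\[\mathcal{E}_{\bsm\al} = \{\bsm\xi \in \RR^n \mid \bsm l \cdot \bsm\xi \in \ZZ,\ \forall \bsm l \in \La_{\bsm\al}\},\]
this is equivalent to showing $\La_{\bsm\al} = \{\bsm 0\}$.

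The final step, which is essentially immediate, is to observe that any $\bsm l \in \La_{\bsm\al}$ satisfies $\sum_{k=1}^n l_k \al_k = 0$ with $l_k \in \ZZ \ssq \QQ$; this is a $\QQ$-linear dependence among $\al_1,\dotsc,\al_n$, and by hypothesis forces $\bsm l = \bsm 0$. There is no genuine obstacle here: the corollary is a direct specialization of Lemma \ref{mainlemma} to the case where the only integer relation among the $\al_k$ is trivial.
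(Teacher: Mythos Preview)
Your proposal is correct and follows essentially the same route as the paper: use the linear independence hypothesis to conclude $\La_{\bsm\al}=\{\bsm 0\}$, hence $\mathcal{E}_{\bsm\al}=\RR^n$, and then invoke Lemma~\ref{mainlemma} to get density. You supply a bit more detail on the topological step (open continuous quotient map) than the paper does, but the argument is the same.
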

\bigskip

\begin{proof}
By hypothesis, we have
 \[\La_{\bsm{\al}}=V_{\bsm{\al}}=\La_{\bsm{\al}}^*=\{\bsm{0}\}\]
and
\[V_{\bsm{\al}}^{\perp}=\mathcal{E}_{\bsm{\al}}=\RR^n,\]
therefore $\ZZ^n+\RR\Re\bsm{\al}+\RR\Im\bsm\al$ is dense in $\RR^n$ by Lemma \ref{mainlemma}, which is equivalent to the desired conclusion.
\end{proof}

\bigskip
\begin{rem}\rm
 The multidimensional version of Weyl's criterion \cite{Weyl,KN} gives something stronger than Kronecker's approximation theorem, in particular that the subgroup
 of $\TT^n$ generated by $\bsm\al_i\bmod\ZZ^n$ is equidistributed in the set of all $\bsm\be\mod\ZZ^n$ with $\bsm r\cdot\be\in \ZZ$. whenever $\bsm r\cdot\bsm\al_i\in \ZZ$
 for all $i$, but this is not necessary for our purposes.
\end{rem}

\bigskip
Let $\de_{\bsm\al}$ be the $l^{\infty}$ distance from $\tfrac{1}{2}\bsm1_n$ to $\mc{E}_{\al}$, where $\bsm1_n=(1,1,\dotsc,1)$ denotes the all-$1$ vector. The above remark
yields $\de_{\bsm\al}=0$ whenever $\rank\La_{\bsm\al}=0$, i.~e. when the coordinates of $\bsm\al$ are linearly independent over $\QQ$. We can also compute $\de_{\bsm\al}$
when $\rank\La_{\bsm\al}=1$, that is, when $\La_{\bsm\al}$ is generated by a single vector $\bsm{l}=(l_1,\dotsc,l_n)$
for which $\gcd(l_1,\dotsc,l_n)=1$.

$\mathcal{E}_{\bsm{\al}}$ is then the union of the hyperplanes $H_M=\{\bsm{x}\in\RR^n|\bsm{l}\cdot\bsm{x}=M\}$, for all $M\in\ZZ$.
%We write $\bsm{y}=(\tfrac{1}{2},\tfrac{1}{2},\dotsc,\tfrac{1}{2})$ for convenience. 
Then,
\[\de_{\bsm{\al}}=\inf_{M\in\ZZ}\inf_{\bsm{x}\in H_M}\lVert\bsm{x}-\tfrac{1}{2}\bs1_n\rVert_{\infty}.\]
We need the following:
\bigskip

\begin{lemma}
For each $M\in\ZZ$, we have
\[\inf_{x\in H_M}\lVert\bsm{x}-\tfrac{1}{2}\bs1_n\rVert_{\infty}=\frac{\lvert M-\tfrac{1}{2}\bsm{l}\cdot\bs1_n\rvert}{\lVert\bsm{l}\rVert_1}.\]
\end{lemma}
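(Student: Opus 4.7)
The plan is to identify the infimum as the $\ell^{\infty}$-distance from the point $\tfrac{1}{2}\bs1_n$ to the affine hyperplane $H_M=\{\bsm x\in\RR^n:\bsm l\cdot\bsm x=M\}$ and compute it through the standard duality between $\ell^{\infty}$ and $\ell^{1}$. The proof then splits into a one-line lower bound via Hölder's inequality and a matching upper bound via an explicit extremizer, so the infimum is in fact attained.

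For the lower bound, I take an arbitrary $\bsm x\in H_M$ and set $\bsm y=\bsm x-\tfrac{1}{2}\bs1_n$. Then $\bsm l\cdot\bsm y=\bsm l\cdot\bsm x-\tfrac{1}{2}\bsm l\cdot\bs1_n=M-\tfrac{1}{2}\bsm l\cdot\bs1_n$, and Hölder's inequality yields
\[
\abs{M-\tfrac{1}{2}\bsm l\cdot\bs1_n}=\abs{\bsm l\cdot\bsm y}\leq\norm{\bsm l}_{1}\,\norm{\bsm y}_{\infty}.
\]
Dividing by $\norm{\bsm l}_{1}$, which is nonzero because $\bsm l$ is the primitive generator of $\La_{\bsm\al}$, gives the desired bound from below on $\norm{\bsm x-\tfrac{1}{2}\bs1_n}_{\infty}$.

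For the matching upper bound I would exhibit the extremizer of the above Hölder step. Define $\sigma_i=\operatorname{sgn}(l_i)$ when $l_i\neq 0$ and $\sigma_i=0$ otherwise, so that $\bsm l\cdot\bsm\sigma=\norm{\bsm l}_{1}$ and $\norm{\bsm\sigma}_{\infty}=1$. Setting
\[
c=\frac{M-\tfrac{1}{2}\bsm l\cdot\bs1_n}{\norm{\bsm l}_{1}},\qquad \bsm x=\tfrac{1}{2}\bs1_n+c\,\bsm\sigma,
\]
a direct computation gives $\bsm l\cdot\bsm x=M$, hence $\bsm x\in H_M$, while $\norm{\bsm x-\tfrac{1}{2}\bs1_n}_{\infty}=\abs{c}$, matching the lower bound exactly.

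No substantive obstacle is anticipated: the lemma is essentially the textbook formula for the $\ell^{\infty}$-distance from a point to a hyperplane. The only minor subtlety is the treatment of zero coordinates of $\bsm l$ in the definition of $\bsm\sigma$, which is handled cleanly by the convention $\sigma_i=0$ on those coordinates (those entries of $\bsm x$ then equal $\tfrac{1}{2}$ and contribute $0$ to the sup norm).
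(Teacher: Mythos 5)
Your proof is correct and rests on the same $\ell^{1}$--$\ell^{\infty}$ H\"older duality as the paper's: the paper invokes H\"older's extremal equality and restricts the supremum to $H_M$ by a homogeneity argument along rays, whereas you split the claim into the H\"older lower bound plus an explicit sign-vector extremizer, which has the minor added benefit of showing the infimum is attained. Your treatment of the zero coordinates of $\bsm l$ is handled correctly.
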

\bigskip

\begin{proof}
 If $\tfrac{1}{2}\bs1_n\in H_M$, then both sides are equal to $0$. We assume that $\tfrac{1}{2}\bs1_n\notin H_M$. By H\"{o}lder's extremal equality,
we have
\[\lVert\bsm{l}\rVert_1=\sup_{\bsm{x}\in\RR^n\sm\{\tfrac{1}{2}\bs1_n\}}\frac{\lvert\bsm{l}\cdot(\bsm{x}-\tfrac{1}{2}\bs1_n)\rvert}{\lVert
\bsm{x}-\tfrac{1}{2}\bs1_n\rVert_{\infty}}.\]
By homogeneity of the above norms, this supremum remains invariant if we restrict $\bsm{x}$ to $H_M$. Indeed, let
$\bsm{x}_t=\tfrac{1}{2}\bs1_n+t(\bsm{x}-\tfrac{1}{2}\bs1_n)$. Then, we observe that
\[\frac{\lvert\bsm{l}\cdot(\bsm{x}_t-\tfrac{1}{2}\bs1_n)\rvert}{\lVert\bsm{x}_t-\tfrac{1}{2}\bs1_n\rVert_{\infty}}
=\frac{\lvert t\rvert\cdot\lvert\bsm{l}\cdot(\bsm{x}-\tfrac{1}{2}\bs1_n)\rvert}{\lvert t\rvert\cdot\lVert\bsm{x}-\tfrac{1}{2}\bs1_n\rVert_{\infty}}
=\frac{\lvert\bsm{l}\cdot(\bsm{x}-\tfrac{1}{2}\bs1_n)\rvert}{\lVert\bsm{x}-\tfrac{1}{2}\bs1_n\rVert_{\infty}}\]
so this expression remains invariant on any ray emanating from $\tfrac{1}{2}\bs1_n$. It suffices to consider the rays intersecting $H_M$;
those who are parallel to $H_M$ satisfy $\bsm{l}\cdot(\bsm{x}-\tfrac{1}{2}\bs1_n)=0$ and do not contribute to the above supremum. Therefore,
\[\lVert\bsm{l}\rVert_1=\sup_{\bsm{x}\in H_M}\frac{\lvert\bsm{l}\cdot(\bsm{x}-\tfrac{1}{2}\bs1_n)\rvert}{\lVert\bsm{x}-\tfrac{1}{2}\bs1_n\rVert_{\infty}}
=\frac{\lvert M-\bsm{l}\cdot\tfrac{1}{2}\bs1_n\rvert}{\inf_{\bsm{x}\in H_M}\lVert\bsm{x}-\tfrac{1}{2}\bs1_n\rVert_{\infty}}\]
which easily yields the desired result.
\end{proof}
\bigskip

\begin{thm}\label{rankone}
 We have $\de_{\bsm{\al}}=0$ when $\lVert\bsm{l}\rVert_1$ is even, and $\de_{\bsm{\al}}=\frac{1}{2\lVert\bsm{l}\rVert_1}$ otherwise.
\end{thm}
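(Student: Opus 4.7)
The plan is simply to unpack the previous lemma and reduce the claim to a parity computation. Since $\La_{\bsm{\al}}$ has rank one with primitive generator $\bsm{l}$, the set $\mathcal{E}_{\bsm{\al}}$ is the union of the affine hyperplanes $H_M$ for $M \in \ZZ$, and by the preceding lemma the $l^{\infty}$ distance from $\tfrac{1}{2}\bs1_n$ to $H_M$ equals $|M - \tfrac{1}{2}\bsm{l}\cdot\bs1_n|/\lVert\bsm{l}\rVert_1$. Taking the infimum over $M$ yields
\[
\de_{\bsm{\al}} \;=\; \frac{\inf_{M \in \ZZ}\,\bigl|M - \tfrac{1}{2}(l_1 + \cdots + l_n)\bigr|}{\lVert\bsm{l}\rVert_1},
\]
so the task reduces to locating the half-integer $\tfrac{1}{2}\sum_i l_i$ modulo $\ZZ$.

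The key (elementary) observation is that $\sum_i l_i$ and $\lVert\bsm{l}\rVert_1 = \sum_i |l_i|$ have the same parity, since $|l_i| - l_i \in \{0, 2|l_i|\}$ is even for every $i$. Consequently, if $\lVert\bsm{l}\rVert_1$ is even, then $\tfrac{1}{2}\sum_i l_i$ is itself an integer, the infimum is attained by $M = \tfrac{1}{2}\sum_i l_i$ and equals $0$, giving $\de_{\bsm{\al}} = 0$. If instead $\lVert\bsm{l}\rVert_1$ is odd, then $\tfrac{1}{2}\sum_i l_i$ is a half-integer, the infimum over $M \in \ZZ$ equals $\tfrac{1}{2}$, and $\de_{\bsm{\al}} = \tfrac{1}{2\lVert\bsm{l}\rVert_1}$.

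There is essentially no obstacle at this stage: once the distance-to-hyperplane formula of the preceding lemma is available, the theorem is a two-line corollary whose entire content is the parity remark above. The substantive work of the section lives in that preceding lemma, where H\"older's extremal equality converts a constrained infimum on a hyperplane into the explicit quotient appearing in the denominator; from there the present theorem follows by inspection.
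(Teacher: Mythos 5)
Your proof is correct and follows essentially the same route as the paper: apply the distance-to-hyperplane lemma, take the infimum over $M$, and use the parity of $\bsm{l}\cdot\bs1_n$. The only difference is cosmetic — you make explicit the observation that $\sum_i l_i$ and $\sum_i |l_i|$ share the same parity, which the paper uses implicitly.
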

\bigskip

\begin{proof}
 If $\lVert\bsm{l}\rVert_1$ is even, then $\bsm{l}\cdot\tfrac{1}{2}\bs1_n\in\ZZ$ and $\tfrac{1}{2}\bs1_n\in\mathcal{E}_{\bsm{\al}}$, hence $\de_{\bsm{\al}}=0$.
 Otherwise, $\bsm{l}\cdot\tfrac{1}{2}\bs1_n$ is a half integer. Using the above Lemma,
\[\de_{\bsm{\al}}=\frac{\inf_{M\in\ZZ}\lvert M-\bsm{l}\cdot\tfrac{1}{2}\bs1_n\rvert}{\lVert\bsm{l}\rVert_1}=
\frac{1}{2\lVert\bsm{l}\rVert_1}.\qedhere\]
\end{proof}
\bigskip

We now restrict to modulus one complex numbers and let $\bsm\theta=(\theta_1,\dotsc,\theta_n)\in(S^1)^n$. Consider a fixed $0<\eps<1/2$. By definition there is
some $\bsm\xi=(\xi_1,\dotsc,\xi_n)\in\mc{E}_{\bsm\theta}$ such that $\set{\xi_k}\in(\eps,1-\eps)$ for all $k$ if and only if $\de_{\bsm\theta}<\tfrac{1}{2}-\eps$.
Then, from Lemma \ref{mainlemma} we deduce the following corollary:

\bigskip
\begin{cor}\label{structure}
With notation as above, there is some $X\in\CC$ such that $\set{\Re X\theta_k}\in(\eps,1-\eps)$ for all $k$, $1\leq k\leq n$,
if and only if $\eps<\tfrac{1}{2}-\de_{\bsm\theta}$.
\end{cor}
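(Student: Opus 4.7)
My plan is to translate the fractional-part condition into a geometric $l^\infty$-neighborhood problem in $\RR^n$ and then apply Lemma \ref{mainlemma} directly.

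First, writing $X=x+iy$ with $x,y\in\RR$ gives $\Re(X\theta_k)=x\Re\theta_k-y\Im\theta_k$, so
\[
\set{(\Re X\theta_1,\dotsc,\Re X\theta_n):X\in\CC}=\RR\Re\bsm\theta+\RR\Im\bsm\theta.
\]
Next, I would observe that for any real $y$, the condition $\set{y}\in(\eps,1-\eps)$ is equivalent to saying that $y$ has distance strictly less than $\tfrac{1}{2}-\eps$ from the nearest half-integer. Writing $B_\infty(\bsm c,r)$ for the open $l^\infty$-ball of radius $r$ centered at $\bsm c$, the existence of an $X\in\CC$ as in the statement is therefore equivalent to
\[
(\ZZ^n+\RR\Re\bsm\theta+\RR\Im\bsm\theta)\cap B_\infty\bra{\tfrac{1}{2}\bs1_n,\tfrac{1}{2}-\eps}\neq\vn.
\]

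The second step is to pass to the closure. By Lemma \ref{mainlemma}, the closure of $\ZZ^n+\RR\Re\bsm\theta+\RR\Im\bsm\theta$ is exactly $\mc{E}_{\bsm\theta}$; and since $B_\infty(\tfrac{1}{2}\bs1_n,\tfrac{1}{2}-\eps)$ is open, a set meets it if and only if its closure does. Hence the displayed condition above is equivalent to
\[
\mc{E}_{\bsm\theta}\cap B_\infty\bra{\tfrac{1}{2}\bs1_n,\tfrac{1}{2}-\eps}\neq\vn.
\]
Finally, by the very definition of $\de_{\bsm\theta}$ as the $l^\infty$-distance from $\tfrac{1}{2}\bs1_n$ to $\mc{E}_{\bsm\theta}$, this non-emptiness is equivalent to $\de_{\bsm\theta}<\tfrac{1}{2}-\eps$, which rearranges to $\eps<\tfrac{1}{2}-\de_{\bsm\theta}$.

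The argument is essentially a sequence of definitional unpackings, with the substantive input (Kronecker's theorem) already absorbed into Lemma \ref{mainlemma}. The only mildly delicate point is the passage to the closure, but this is handled automatically by the fact that the set we intersect with is open, so no extra approximation work is required.
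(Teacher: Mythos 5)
Your proof is correct and follows essentially the same route as the paper: reformulate the fractional-part condition as an open $l^\infty$-ball around $\tfrac{1}{2}\bs1_n$ modulo $\ZZ^n$, pass to the closure via Lemma \ref{mainlemma} (the openness of the ball justifying this), and conclude from the definition of $\de_{\bsm\theta}$. The paper states this argument only in a terse sentence before the corollary, so your write-up simply supplies the details it leaves implicit.
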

\bigskip

\begin{rem}
 In reference to the ``pyjama'' problem and the invariance of $\La_{\bsm\theta}$, $V_{\bsm\theta}$, $\mc{E}_{\bsm\theta}$, and $\de_{\bsm\theta}$
 under complex conjugation of the coordinates of $\bsm\theta$, the above Corollary is equivalent to the following statement: {\em the union the rotations of
 $E_{\eps}$ by $\theta_1,\dotsc,\theta_n$ covers $\CC$ if and only if $\eps<\tfrac{1}{2}-\de_{\bsm\theta}$}.
\end{rem}

\bigskip
\bigskip

\section{Roots of unity and proof of main result}
\bigskip

We further restrict to rotations that correspond to roots of unity. Let $\om_n=e^{\frac{2\pi i}{n}}$ be a primitive root of unity, $\bsm\om_n=(1,\om_n,\om_n^2,\dotsc,\om_n^{n-1})$, 
and denote by $\La_n$ the
lattice $\La_{\bsm\om_n}$. Similarly, denote $\mathcal{E}_n:=\mathcal{E}_{\bsm\om_n}$ and $\de_n:=\de_{\bsm\om_n}$. When $n=p$ is a prime, then $\La_p$ has rank 
one\footnote{This follows easily from the fact that $[\QQ(\om_p):\QQ]=p-1$ for $p$ prime} and is
generated by $\bs 1_p$. Then, by Theorem \ref{rankone} we get $\de_2=0$ and
\begin{equation}\label{deltaprime}
 \de_p=\frac{1}{2p},
\end{equation}
for an odd prime $p$. Moreover, when $m$ divides $n$, the rotations of $X\in\CC$ by $m$th roots of unity is a subset those by $n$th roots of unity, hence we obtain
\begin{equation}\label{deltadiv}
 \de_m\leq\de_n,\	\text{ if }\	m|n.
\end{equation}
In order to compute $\de_n$ for all $n$, we need a nice characterization of $\La_n$, or equivalently, we need to characterize all $\ZZ$-linear combinations of $n$th roots
of unity that amount to $0$. This characterization was first given by R\'edei \cite{R1} with an incomplete proof\footnote{This is mentioned in \cite{deB}; see also \cite{LL}};
complete proofs were later given by de Brujin \cite{deB}, R\'edei \cite{R2}, and Schoenberg \cite{Sch}.

Denote by $\bs e(i,k)$ the unit vector in $\CC^k$ whose $i$th coordinate is equal to $1$. Finally, we will denote by 
$\otimes$ the usual Kronecker product, and by $\Phi_n$ the cyclotomic polynomial of order $n$.

%The following Lemma gives a nice characterization of the lattice $\La_n$.

\bigskip
\begin{lemma}[\cite{deB,R1,R2,Sch}]\label{rtsstr}
 $\La_n$ is generated by the vectors $\bs 1_p\otimes\bs e(i,\frac{n}{p})$, for all primes $p\mid n$ and $1\leq i
\leq \frac{n}{p}$.
\end{lemma}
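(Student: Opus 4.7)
The inclusion $V_n\ssq\La_n$ is immediate: the polynomial $x^{i-1}\bra{1+x^{n/p}+\cdots+x^{(p-1)n/p}}$ corresponding to $\bs 1_p\otimes\bs e(i,n/p)$ evaluates at $\om_n$ to $\om_n^{i-1}\sum_{k=0}^{p-1}\om_p^k=0$.

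For the reverse inclusion, I would identify $\ZZ^n$ with $\ZZ[x]/(x^n-1)$; under this identification $\La_n=\set{f:\Phi_n\mid f}$ and, since $\Phi_n$ is monic, $\La_n$ is the ideal generated by $\Phi_n$. The sublattice $V_n$ is also an ideal, generated as a $\ZZ[x]$-module by the elements $g_p^{(n)}:=1+x^{n/p}+\cdots+x^{(p-1)n/p}$ for $p\mid n$. Hence the claim reduces to showing $\Phi_n\in V_n$, and I would proceed by strong induction on $n$. The prime-power case $n=p^k$ is trivial since there $\Phi_n=g_p^{(n)}$.

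When $n$ has at least two distinct prime divisors, write $n=p^a m$ with $\gcd(p,m)=1$ and $m>1$. The cyclotomic identities $\Phi_n(x)=\Phi_{pm}(x^{p^{a-1}})$ and $g_r^{(pm)}(x^{p^{a-1}})=g_r^{(n)}(x)$ for every prime $r\mid pm$ let me transport an expression $\Phi_{pm}\in V_{pm}$, supplied by the inductive hypothesis when $a\ge2$, into an expression of $\Phi_n$ as a $\ZZ$-combination of shifted generators of $V_n$. This leaves only the base sub-case $a=1$ (so $n=pm$ with $p\nmid m$), which I would handle by combining two ingredients: (i) applying $x\mapsto x^p$ to an inductive expression of $\Phi_m\in V_m$ gives $\Phi_m(x^p)\in V_n$, and the standard identity $\Phi_m(x^p)=\Phi_n\Phi_m$ (valid for $p\nmid m$) then yields $\Phi_n\Phi_m\in V_n$; (ii) the factorization $g_p^{(n)}=\prod_{d\mid m}\Phi_{pd}=\Phi_n\cdot F$ with $F:=\prod_{d\mid m,\,d<m}\Phi_{pd}$ gives $\Phi_n F\in V_n$. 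An integer B\'ezout identity $A(x)F(x)+B(x)\Phi_m(x)=1$ would then deliver $\Phi_n=A\cdot g_p^{(n)}+B\cdot\Phi_n\Phi_m\in V_n$ and close the induction.

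The main obstacle is producing precisely that \emph{integer} B\'ezout identity. This depends on Apostol's classical resultant formula for cyclotomic polynomials, which states that $\mathrm{Res}(\Phi_a,\Phi_b)=\pm1$ unless $\max(a,b)/\min(a,b)$ is a prime power. For each $d\mid m$ with $d<m$, the ratio between $pd$ and $m$ fails the prime-power condition (using $p\nmid m$), so $\mathrm{Res}(\Phi_{pd},\Phi_m)=\pm1$ and hence $\mathrm{Res}(F,\Phi_m)=\pm1$. It is this integrality of the resultant, rather than the easy rational identity $\La_n\otimes\QQ=V_n\otimes\QQ$, that is the technical heart of the R\'edei/de Bruijn/Schoenberg theorem.
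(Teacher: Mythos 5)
Your proposal is correct, but there is nothing in the paper to compare it against: Lemma \ref{rtsstr} is quoted as a classical theorem of R\'edei, de Bruijn and Schoenberg, with the proof deferred entirely to the cited literature. Judged on its own, your argument is sound. The identification of $\ZZ^n$ with $\ZZ[x]/(x^n-1)$ correctly turns $\La_n$ into the ideal generated by $\Phi_n$ (Gauss's lemma, $\Phi_n$ monic) and turns the span of the vectors $\bs 1_p\otimes\bs e(i,\tfrac np)$ into the ideal generated by the $g_p^{(n)}$, since multiplication by $x$ permutes those generators modulo $x^n-1$; the reduction of the lemma to ``$\Phi_n$ lies in the ideal generated by the $g_p^{(n)}$'' is therefore legitimate. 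The inductive step via $\Phi_{p^am}(x)=\Phi_{pm}(x^{p^{a-1}})$ and $g_r^{(pm)}(x^{p^{a-1}})=g_r^{(n)}(x)$ is clean, and in the squarefree-in-$p$ case the two memberships $\Phi_n\Phi_m\in V_n$ and $\Phi_nF=g_p^{(n)}\in V_n$ do combine as claimed once one has $AF+B\Phi_m=1$ over $\ZZ[x]$. You correctly identify that this integral B\'ezout identity is the crux, and your appeal to the coprimality of cyclotomic resultants (for $d\mid m$, $d<m$, neither of $pd/m$, $m/(pd)$ is even an integer, since $p\nmid m$ and $m\nmid d$, so $\mathrm{Res}(\Phi_{pd},\Phi_m)=\pm1$; multiplicativity then gives $\mathrm{Res}(F,\Phi_m)=\pm1$, which lies in the ideal $(F,\Phi_m)\cap\ZZ$) closes the gap. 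This route is closest in spirit to Schoenberg's and to the Lam--Leung treatment; de Bruijn's original argument is phrased in terms of group-ring factorizations instead. One presentational caveat: Apostol's resultant formula postdates the results you are reproving, so if the point were to reconstruct the original proofs this would be anachronistic, but as a correct proof of the statement it is unobjectionable.
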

\bigskip

\begin{rem}\rm
 Notice that the above vectors form a basis of $\La_n$ precisely when $n$ is a prime power. In the general case, the above Lemma states that all $\ZZ$-linear
 combinations of $n$th roots of unity that vanish, are essentially $\ZZ$-linear combinations of formal sums of the form
 \[\ze(1+\om_p+\dotsb+\om_{p}^{p-1}),\]
 where $\ze$ is any $n$th root of unity and $p$ is any prime divisor of $n$.
\end{rem}
\bigskip

%We will postpone the proof of this Lemma for later. 
We will now construct a point in $\mathcal{E}_n$ whose
distance from $\frac{1}{2}\bs 1_n$ is precisely $\frac{1}{2p}$, where $p$ is the smallest odd prime dividing $n$, when $n$
is not a power of $2$. In the remaining case, we will show that $\frac{1}{2}\bs 1_n\in\mathcal{E}_n$.

\bigskip
\begin{proof}[Proof of Theorem \ref{main}]
Assume first that $n$ is a power of $2$. Then, by Lemma \ref{rtsstr} the lattice $\La_n$ is generated by the vectors $\bs 1_2\otimes\bs e(i,\frac{n}{2})$
for $1\leqslant i\leqslant\frac{n}{2}$. Hence, the inner product of the vectors $\frac{1}{2}\bs 1_n$ and $\bs 1_2\otimes\bs e(i,\frac{n}{2})$
is equal to $1$ for all $i$. This clearly shows that $\frac{1}{2}\bs 1_n\in\mathcal{E}_n$, hence $\de_n=0$, as was to be shown.

Next, assume that $n$ is not a power of $2$, and let $p$ be its smallest odd prime divisor. Consider the following polynomials:
\begin{eqnarray*}
\Psi_2(X)=X-1,\	\Psi_p(X)=\Phi_p(X),\\
\Psi_q(X)=\sum_{i=0}^{(q+p)/2}X^i-\sum_{i=\frac{q+p}{2}+1}^{q-1}X^i, q>p, q\mid n.
\end{eqnarray*}
Next, define
\[T(X)=\sum_{j\geqslant0}\tau_jX^j:=\frac{X^{\frac{n}{m}-1}}{X-1}\prod_{q\mid n}\Psi_q(X^{\frac{n}{q}})\]
where $m$ denotes the square-free part of $n$. If we expand the product on the right, we get $n$ terms of the form
\[\pm X^{j+\sum_{q\mid n}j_q\frac{n}{q}}\]
where $0\leqslant j\leqslant \frac{n}{m}-1$, $0\leqslant j_q\leqslant q-1$, for all primes $q$ dividing $n$. It is not
hard to verify that the above exponents are pairwise incongruent modulo $n$. So, if we let $\rho(X)$ be the remainder of 
$T(X)$ divided by $X^n-1$, we will have
\[\rho(X)=\sum_{j=0}^{n-1}\eps_jX^j\]
where $\eps_j=\pm1$ for all $j$. Next, consider the vector
\[\bsm{\xi}=\left(\frac{p-\eps_0}{2p},\frac{p-\eps_1}{2p},\dotsc,\frac{p-\eps_{n-1}}{2p}\right).\]
We will show that $\bsm{\xi}\in\mathcal{E}_n$. Let $l$ be an arbitrary prime dividing $n$; the inner product of $\bsm{\xi}$
with $\bs 1_l\otimes\bs e(i,\frac{n}{l})$ is
\[\frac{l}{2}-\frac{1}{2p}\sum_{j\equiv i\bmod\frac{n}{l}}\eps_j=\frac{l}{2}-\frac{1}{2p}\sum_{j\equiv i\bmod\frac{n}{l}}\tau_j.\]
From the definition of $T(X)$, we must have
\[\sum_{j\equiv i\bmod{\frac{n}{l}}}\tau_jX^j=\pm X^{j+\sum_{q\mid \frac{n}{l}}j_q\frac{n}{q}}\Psi_l(X^{\frac{n}{l}})\]
where $q$ runs through the primes dividing $\frac{n}{l}$ and $j$, $j_q$ ($q\neq l$) are the unique integers satisfying
$0\leqslant j\leqslant \frac{n}{m}-1$, $0\leqslant j_q\leqslant q-1$, and
\[j+\sum_{q\mid \frac{n}{l}}j_q\frac{n}{q}\equiv i\bmod\frac{n}{l}.\]
Thus, $\sum_{j\equiv i\bmod\frac{n}{l}}\tau_j=\pm\Psi_l(1)$. If $l$ is odd, this sum is equal to $\pm p$, and if $l=2$, it is equal to
$0$. At any rate, we have $\langle\bsm{\xi},\bs 1_l\otimes\bs e(i,\frac{n}{l})\rangle\in\ZZ$, therefore by Lemma \ref{rtsstr} we
get $\bsm{\xi}\in\mathcal{E}_n$. The $l^{\infty}$ distance from $\frac{1}{2}\bs 1_n$ to $\bsm{\xi}$ is precisely $\frac{1}{2p}$,
so $\de_n\leq \frac{1}{2p}$. On the other hand, $\de_n\geq\de_p$ by \eqref{deltadiv}, and $\de_p=\frac{1}{2p}$ by 
\eqref{deltaprime}, thus $\de_n=\frac{1}{2p}$. Finally, Corollary \ref{structure} yields the conclusion of Theorem \ref{main} by observing
that $1/2-\de_n=\frac{p-1}{2p}$, when $n$ is not a power of $2$, while $1/2-\de_n=1/2$ otherwise.
\end{proof}

\bigskip
\bigskip

\end{document}